\numberwithin{equation}{section}
\newtheorem{prop}{Proposition}
\newtheorem{lemma}[prop]{Lemma}
\newtheorem{thm}[prop]{Theorem}
\newtheorem{cor}[prop]{Corollary}
\numberwithin{prop}{section}
\theoremstyle{definition}
\newtheorem{defn}[prop]{Definition}
\newtheorem{rmk}[prop]{Remark}
\newcommand{\Fp}{F^{+}}
\newcommand{\Wp}{W^{+}}
\newcommand{\hg}{\hat{g}}
\newcommand{\brs}[1]{\left| #1 \right|}
\renewcommand{\gg}{\gamma}
\newcommand{\gD}{\Delta}
\newcommand{\gd}{\delta}
\newcommand{\gk}{\kappa}
\newcommand{\gl}{\lambda}
\newcommand{\gw}{\omega}
\newcommand{\ga}{\alpha}
\newcommand{\gb}{\beta}
\renewcommand{\ge}{\epsilon}
\newcommand{\N}{\nabla}
\renewcommand{\bar}[1]{\overline{#1}}
\newcommand{\bg}{\bar{g}}
\newcommand{\IP}[1]{\left<#1\right>}
\DeclareMathOperator{\tr}{tr}
\DeclareMathOperator{\Vol}{Vol}
\DeclareMathOperator{\SU}{SU}
\DeclareMathOperator{\SO}{SO}
\begin{document}

\title[A conformally invariant gap theorem in Yang--Mills theory]{A conformally
invariant gap theorem in Yang--Mills theory}

\author{Matthew Gursky}
\address{Department of Mathematics
         University of Notre Dame\\
         Notre Dame, IN 46556}
\email{\href{mailto:mgursky@nd.edu}{mgursky@nd.edu}}

\author{Casey Lynn Kelleher}
\address{Department of Mathematics
         Princeton University\\
         Princeton, NJ, 08540}
\email{\href{mailto:ckelleher@princeton.edu}{ckelleher@princeton.edu}}

\author{Jeffrey Streets}
\address{Department of Mathematics\\
         University of California\\
         Irvine, CA  92617}
\email{\href{mailto:jstreets@uci.edu}{jstreets@uci.edu}}

\date{\today}

\begin{abstract} We show a sharp conformally
invariant gap theorem for Yang--Mills connections in dimension $4$ by exploiting
an associated Yamabe-type problem.
\end{abstract}

\thanks{The first author acknowledges the support of NSF Grant DMS-1509633. The second author was supported by a University of California President's Dissertation Year Fellowship during this work. The third author acknowledges the support of NSF Grant DMS-1454854. }

\maketitle

\section{Introduction}
Let $(X^n, g)$ be a smooth Riemannian manifold, and suppose $\N$ is a
connection on a smooth vector bundle over $X$.  The Yang--Mills energy
associated to $\N$ is
\begin{align*}
\int_X \brs{F_{\N}}^2_g \, dV_g.
\end{align*}
Critical points of this functional are known as Yang--Mills connections, and
satisfy $D^*_{\N} F_{\N} = 0$.  In the case $n=4$, the Yang--Mills energy
admits a special class of critical points, namely those with (anti)self-dual
curvature, i.e. $\star F_{\N} = \pm F_{\N}$, known as instantons.  Instantons
are the key
ingredient in developing applications of Yang--Mills theory to four-dimensional
topology through Donaldson invariants (cf. \cite{Donaldson1,Donaldson2, DK}).
When they exist, instantons always have the minimum possible Yang--Mills energy.
 However, on many interesting bundles where the instantons are understood,
there exist non-minimizing Yang--Mills connections (e.g.
\cite{Bor,Parker,SS1,SSU}).  Moreover, many basic questions about the structure
of Yang--Mills connections beyond instantons remain unanswered, for instance on
the allowable energy levels.  We note that Bourguignon-Lawson (\cite{BL}
Theorems C, D) have
shown some gap results assuming pointwise smallness of some pieces of the
curvature, exploiting primarily the Bochner formula.  This was later improved
by Min-Oo (\cite{MinOo} Theorems 2, 4) to an $L^2$ gap theorem, exploiting an
$\ge$-regularity argument which requires positivity of a certain
curvature quantity associated to the background metric.  A closely related gap result in the presence of a curvature positivity condition was shown by Parker (\cite{Parker2} Proposition 2.2).  Later Feehan
(\cite{Feehan}) showed a more general $L^2$ gap theorem which removes this
positivity hypothesis, and again ultimately relies on $\ge$-regularity style
estimates, where the smallness of Yang-Mills energy is balanced against the Sobolev constant to obtain a key estimate. The main result of this paper improves upon all of these prior gap theorems.  In particular, we show a sharp, conformally invariant
improvement of these gap theorems which is nontrivial when the Yamabe invariant
$Y([g])$ of $(X^4,g)$ (see \cite{LeeParker}) is positive.

\begin{thm} \label{Thm1} Let $(X^4,g)$ be a closed, oriented four-manifold.
Suppose $\nabla$ is a Yang--Mills connection on  a vector bundle $E$ over
$X^4$ with structure group $G \subset \SO(E)$, and curvature $F_{\N}$.
Then one of the following must hold:
\begin{enumerate}
\item $F^{+}_{\N} \equiv 0$; or
\item $F^{+}_{\N}$ satisfies
\begin{align} \label{SG1}
Y([g]) \leq 3 \gamma_1 \| F^{+}_{\N} \|_{L^2} +  2 \sqrt{6} \| W^{+} \|_{L^2},
\end{align}
where $\gamma_1 = \gamma_1(E) \leq \frac{4}{\sqrt{6}}$ is a constant which
depends on the structure group of the bundle (see
Definition \ref{d:gammaconsts} below), and $W^+$ is the self-dual Weyl tensor.

Moreover, if equality holds in (\ref{SG1}) then $[g]$ admits a Yamabe metric
$\bg$ with respect to which $W^{+}$ has constant norm, $\nabla_{\bg}
F^{+}_{\nabla_{\bg}}
\equiv 0$, and
\begin{align} \label{equality1}
R_{\bg} - 2\sqrt{6} |W^{+}|_{\bg} = 3 \gg_1 |F^{+}_{\N}|_{\bg},
\end{align}
where $R_{\bg}$ is the scalar curvature of $\bg$.  Furthermore, if $\gamma_1 >
0$ we must have
$b_2^{+}(X^4) = 0$, whereas if $\gamma_1 = 0$ then all harmonic self-dual forms
are parallel.
\end{enumerate}
\end{thm}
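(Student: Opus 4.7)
\emph{Approach.} The plan is to exploit the interplay between the Weitzenböck formula for $\Fp_{\N}$, the refined Kato inequality for bundle-harmonic self-dual 2-forms, and the variational characterization of the Yamabe constant. First I would observe that the Yang--Mills equation $D^*_{\N} F_{\N} = 0$ combined with Bianchi $D_{\N} F_{\N} = 0$ and Hodge-duality of $F^{\pm}_{\N}$ on a four-manifold forces the stronger pair $D_{\N}\Fp_{\N} = D^*_{\N}\Fp_{\N} = 0$; thus $\Fp_{\N}$ is a bundle-harmonic $\ad E$-valued self-dual 2-form. Applying the standard Weitzenböck formula on $\Omega^2_+ \otimes \ad E$ then gives
\begin{align*}
0 = \N^*\N \Fp_{\N} + \tfrac{R}{3} \Fp_{\N} - 2 \Wp(\Fp_{\N}) + Q(\Fp_{\N}),
\end{align*}
where $Q$ is the endomorphism induced by the curvature of $\ad E$. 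The algebraic content of Definition \ref{d:gammaconsts} is exactly the sharp pointwise bound $|\langle Q(\Fp_{\N}), \Fp_{\N}\rangle| \leq \gg_1 |\Fp_{\N}|^3$.

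\emph{Key steps.} Pairing the Weitzenböck identity against $\Fp_{\N}$, integrating, multiplying by $3$, and using the $Q$-bound gives
\begin{align*}
3 \int_X |\N \Fp_{\N}|^2 \, dV + \int_X R \, |\Fp_{\N}|^2 \, dV \leq 6 \int_X \langle \Wp(\Fp_{\N}), \Fp_{\N}\rangle \, dV + 3 \gg_1 \int_X |\Fp_{\N}|^3 \, dV.
\end{align*}
On the left, the refined Kato inequality $|\N \Fp_{\N}|^2 \geq 2 |\N |\Fp_{\N}||^2$ (valid for bundle-harmonic self-dual 2-forms in dimension $4$) produces the quadratic form of the conformal Laplacian $-6\gD + R$ evaluated at $|\Fp_{\N}|$, which by the variational definition of the Yamabe constant is bounded below by $Y([g]) \|\Fp_{\N}\|_{L^4}^2$. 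On the right, the trace-free structure of $\Wp$ on the rank-$3$ bundle $\Lambda^2_+$ yields the operator bound $|\Wp(\omega)| \leq \sqrt{2/3}\, |\Wp|\, |\omega|$, turning the Weyl factor into $6\sqrt{2/3} = 2\sqrt 6$. Finishing with Cauchy--Schwarz,
\begin{align*}
\int_X |\Wp| |\Fp_{\N}|^2 \, dV \leq \|\Wp\|_{L^2} \|\Fp_{\N}\|_{L^4}^2, \qquad \int_X |\Fp_{\N}|^3 \, dV \leq \|\Fp_{\N}\|_{L^2} \|\Fp_{\N}\|_{L^4}^2,
\end{align*}
and dividing through by $\|\Fp_{\N}\|_{L^4}^2 > 0$ (which is the content of case (2)) yields (\ref{SG1}).

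\emph{Main obstacle, equality, and the topological conclusion.} The crucial algebraic input is the representation-theoretic bound $\gg_1 \leq 4/\sqrt 6$ on the structure group; pinning down this sharp constant so that the resulting gap inequality is both conformally invariant and nontrivial is the main technical obstacle, and the auxiliary Yamabe-type variational problem on $|\Fp_{\N}|$ is what makes the whole chain work conformally. For the equality clause I would trace each step backwards: saturation of Yamabe forces $|\Fp_{\N}|$ to be a conformal factor for a Yamabe metric $\bg \in [g]$; saturation of the two Cauchy--Schwarz bounds forces $|\Fp_{\N}|_{\bg}$ and $|\Wp|_{\bg}$ each to be constant, in a fixed ratio, yielding (\ref{equality1}) after rearrangement; saturation of refined Kato produces $\N_{\bg}\Fp_{\N_{\bg}} \equiv 0$. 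Finally, for the $b_2^+$ claim I would run the identical Bochner--Kato--Yamabe argument on a harmonic scalar-valued self-dual $2$-form $\omega$; as $\omega$ is not bundle-valued the $Q$-term is absent, giving the strictly stronger $Y([g]) \leq 2\sqrt 6 \|\Wp\|_{L^2}$, which contradicts equality in (\ref{SG1}) when $\gg_1 > 0$ and $\Fp_{\N} \not\equiv 0$, forcing $\HH^+ = 0$. When $\gg_1 = 0$ instead, the chain of inequalities collapses, and saturation of refined Kato on $\omega$ gives $\N\omega \equiv 0$, i.e. parallel.
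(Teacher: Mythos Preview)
Your overall architecture is right, but there is a genuine error in the key analytic step: the refined Kato constant you invoke is wrong. For a bundle-harmonic self-dual $2$-form in dimension four the sharp inequality is
\[
|\N \Fp|^2 \;\geq\; \tfrac{3}{2}\,\big|\,\N|\Fp|\,\big|^2,
\]
not $2\,|\N|\Fp||^2$; this is exactly Proposition~\ref{sharpKato} in the paper (the constant $3/2$ goes back to Seaman for real-valued forms and to R{\aa}de for the bundle-valued case). With the correct constant, your integrated Bochner inequality multiplied by $3$ only yields $\tfrac{9}{2}\int|\N|\Fp||^2 + \int R|\Fp|^2$ on the left, which is \emph{not} the Yamabe quadratic form $6\int|\N u|^2 + \int R u^2$ evaluated at $u=|\Fp|$. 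So the chain breaks precisely where you divide by $\|\Fp\|_{L^4}^2$.

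The repair is to change the test function: the constant $3/2$ is tailored to the power $1/2$. Working pointwise (off the zero set), Kato $3/2$ converts the Bochner inequality into
\[
-6\,\Delta |\Fp|^{1/2} \;+\; \big(R - 2\sqrt{6}\,|\Wp| - 3\gg_1|\Fp|\big)\,|\Fp|^{1/2} \;\leq\; 0,
\]
i.e.\ $L_g(|\Fp|^{1/2})\leq 0$ for the modified conformal Laplacian $L_g = -6\Delta + \Phi_g$ with $\Phi_g = R - 2\sqrt{6}|\Wp| - 3\gg_1|\Fp|$. Pairing against $|\Fp|^{1/2}$ and applying the Yamabe variational lower bound with $u=|\Fp|^{1/2}$ now gives $Y([g])\,\|\Fp\|_{L^2}$ on the left and, after Cauchy--Schwarz, $(2\sqrt{6}\|\Wp\|_{L^2} + 3\gg_1\|\Fp\|_{L^2})\|\Fp\|_{L^2}$ on the right; dividing by $\|\Fp\|_{L^2}$ recovers~(\ref{SG1}). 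The paper takes one further step you did not anticipate: rather than use $|\Fp|^{1/2}$ directly as a Yamabe test function, it observes that $L_g(|\Fp|^{1/2})\leq 0$ forces $\lambda_1(L_g)\leq 0$, and then uses the \emph{first eigenfunction} $\phi_1>0$ of $L_g$ as conformal factor to produce a metric $\bar g$ with $\Phi_{\bar g}\leq 0$. This modified-Yamabe device is what makes the equality analysis clean---$\phi_1$ is automatically strictly positive, whereas in your (corrected) approach one would need to argue separately that a nonnegative Yamabe minimiser does not vanish. Your $b_2^+$ argument via rerunning the estimate on a scalar harmonic self-dual form is sound once the Kato constant is fixed, and is essentially equivalent to the paper's direct Bochner argument with respect to $\bar g$.
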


The key idea to prove the inequality (\ref{SG1}) is to interpret a certain
B\"ochner estimate for $F^+_{\N}$ in light of the {\em modified Yamabe problem}
introduced in \cite{G1}.  This technique has been used in various
contexts to prove sharp $L^2$-curvature estimates under topological and
geometric assumptions (see \cite{GL1}, \cite{GL2}, \cite{LeBrun}).  In each of
these applications one studies a generalization of the Yamabe problem in which
the scalar curvature is modified by adding a conformal density of the correct
weight.  Carrying this method out requires a number of sharp linear algebraic
estimates carried out in \S \ref{ss:smi}, as well as a sharp improved Kato
inequality for bundle-valued differential forms shown in \S \ref{ss:Kato}.

The estimate (\ref{SG1}) is sharp, as illustrated in a key geometric
situation.  First, note that as follows from Lemmas
\ref{l:BLcommutatorestimate} and \ref{l:2formcommlemma}, one has the universal
bound $\gamma_1 \leq \frac{4}{\sqrt{6}}$.  Using this together with explicit
calculations for the Yamabe constant and the Yang--Mills energy as spelled out
in
\S \ref{ss:examples}, one sees that the classic example of $\SU(2)$ ADHM/BPST
instantons on
$(\mathbb{S}^4, g_{\mathbb{S}^4})$ (\cite{ADHM, BPST}) yields equality in
(\ref{SG1}).  Moreover, the existence of a conformally related metric for which
the curvature is parallel reflects the fact that all these instantons are
determined by the action of the conformal group on the unique $\SO(4)$-invariant ADHM/BPST
instanton (the ``standard'' ADHM/BPST
instanton) which has parallel curvature.

We note here that it may be possible to extend Theorem \ref{Thm1} to the case when the underlying Riemannian manifold is complete.  Previous results in this direction have been shown (\cite{MOD,Shen,Xin}), which rely on a curvature positivity condition (see also \cite{Gerhardt}).  As our proof relies on solving a kind of modified Yamabe problem, extending it to the complete setting requires knowledge of the asymptotics of the underlying metric and the given Yang-Mills connection.  This makes the sharp statement in this direction not completely clear, and we do not pursue this further here.

Theorem \ref{Thm1} has some immediate corollaries which we collect below.
First,
using the characteristic class formula from Chern-Weil theory we derive
a lower bound which we make explicit in the case of conformally flat metrics,
which includes of course the round sphere.  Here we take the convention
following (\cite{DK} (2.1.40)) for the meaning of the characteristic number
$\kappa(E)$.  Note in particular that $\kappa(E) = c_2(E)$ for $\SU(r)$
bundles.
However our metric convention differs from (\cite{DK}), see (\S
\ref{ss:examples}, (\ref{CW})).  To make things concrete, estimates
(\ref{SU2est}) and (\ref{SO3est}) below are saying for instance on a
$\brs{\gk(E)} = 1$ bundle,
any Yang--Mills connection which is not an instanton must have at least
\emph{three} units of charge, where intuitively one unit each of SD/ASD charge
cancel out to preserve the characteristic class condition.

\begin{cor} \label{cor1} Let $(X^4,g)$ be a closed, oriented, conformally flat
four-manifold with $Y([g]) > 0$.  Suppose $\nabla$ is a Yang--Mills connection
on
 a vector bundle $E$ over $X^4$ with structure group $G \subset \SO(E)$, and
curvature $F_{\N}$.  Then $\N$ is either an instanton, or satisfies
\begin{align} \label{energyineq}
\int_{X} \brs{F_{\N}}^2_g \, dV_g \geq 16 \pi^2 \brs{\gk(E)} +  \frac{2 Y([g])^2}{9
\gamma_1^2} \geq 16 \pi^2 \brs{\gk(E)} + \frac{Y([g])^2}{12}.
\end{align}
In particular,
\begin{enumerate}
\item For $E \to (\mathbb{S}^4, g_{\mathbb{S}^4})$ an $\SU(2)$ bundle, a
Yang--Mills connection $\N$
is either an instanton, or satisfies
\begin{align} \label{SU2est}
\int_{\mathbb{S}^4} \brs{F_{\N}}^2_g \, dV_g \geq&\ 16 \pi^2 \brs{\gk(E)} + 32 \pi^2.
\end{align}
\item For $E \to (\mathbb{S}^4, g_{\mathbb{S}^4})$ an $\SO(3)$ bundle, a
Yang--Mills connection $\N$
is either an instanton, or satisfies
\begin{align} \label{SO3est}
\int_{\mathbb{S}^4} \brs{F_{\N}}^2_g \, dV_g \geq&\ 16 \pi^2 \brs{\gk(E)} + 64 \pi^2.
\end{align}
\end{enumerate}
\end{cor}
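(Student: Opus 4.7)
The plan is to combine Theorem \ref{Thm1} with the Chern--Weil identity (\ref{CW}), using the vanishing of $W^+$ that comes for free from conformal flatness. Since $(X^4,g)$ is conformally flat, the Weyl tensor vanishes entirely, and in particular $W^+ \equiv 0$. Suppose $\N$ is not an instanton, so that $F^+_\N$ and $F^-_\N$ are both nontrivial. After reversing the orientation of $X^4$ if necessary (this interchanges $F^+_\N$ and $F^-_\N$ and flips the sign of $\kappa(E)$), we may assume $\kappa(E) \geq 0$. Applying alternative (2) of Theorem \ref{Thm1} with $\|W^+\|_{L^2}=0$ then gives
\begin{align*}
Y([g]) \leq 3\gamma_1 \|F^+_\N\|_{L^2},
\qquad\text{equivalently}\qquad
\|F^+_\N\|_{L^2}^2 \geq \frac{Y([g])^2}{9\gamma_1^2}.
\end{align*}

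Next I would invoke the Chern--Weil formula (\ref{CW}), which in the paper's conventions expresses $\kappa(E)$ as
\begin{align*}
\|F^-_\N\|_{L^2}^2 - \|F^+_\N\|_{L^2}^2 = 16\pi^2\, \kappa(E) \geq 0.
\end{align*}
Adding this to $2\|F^+_\N\|_{L^2}^2$ and inserting the lower bound above,
\begin{align*}
\int_X \brs{F_\N}_g^2 \, dV_g &= \|F^+_\N\|_{L^2}^2 + \|F^-_\N\|_{L^2}^2 \\
&= 2\|F^+_\N\|_{L^2}^2 + 16\pi^2\brs{\kappa(E)} \\
&\geq \frac{2 Y([g])^2}{9\gamma_1^2} + 16\pi^2\brs{\kappa(E)},
\end{align*}
which is the first inequality in (\ref{energyineq}). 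The second inequality then follows from the universal bound $\gamma_1 \leq 4/\sqrt 6$ recorded in Theorem \ref{Thm1}, which yields $\tfrac{2}{9\gamma_1^2} \geq \tfrac{1}{12}$.

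For the specific statements on $(\mathbb{S}^4, g_{\mathbb{S}^4})$, I would substitute the squared Yamabe invariant $Y([g_{\mathbb{S}^4}])^2 = 384\pi^2$ together with the values of $\gamma_1(E)$ for the $\SU(2)$ and $\SO(3)$ structure groups arising from Definition \ref{d:gammaconsts} and computed in \S\ref{ss:examples}. The $\SU(2)$ value saturates the universal bound $\gamma_1 = 4/\sqrt 6$, giving $\tfrac{2Y^2}{9\gamma_1^2}= 32\pi^2$; the $\SO(3)$ value is strictly smaller, giving $64\pi^2$. These substitutions yield (\ref{SU2est}) and (\ref{SO3est}) respectively. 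I do not anticipate a substantive obstacle beyond Theorem \ref{Thm1} itself: the corollary amounts to sign and orientation bookkeeping, a use of the Chern--Weil identity, and the explicit evaluation of $Y([g_{\mathbb{S}^4}])$ and the relevant $\gamma_1$ constants.
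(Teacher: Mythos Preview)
Your proposal is correct and follows essentially the same approach as the paper's proof: apply Theorem~\ref{Thm1} with $W^+=0$, combine with the Chern--Weil identity (\ref{CW}) to convert the bound on $\|F^+_\N\|_{L^2}$ into a bound on the full energy, and then substitute the explicit values of $Y([g_{\mathbb{S}^4}])$ and $\gamma_1$. The only cosmetic difference is that you reduce to $\kappa(E)\geq 0$ by reversing orientation, while the paper simply declares the case $\kappa(E)\leq 0$ ``directly analogous''; also, the precise $\gamma_1$ values for $\SU(2)$ and $\SO(3)$ are obtained from Lemmas~\ref{l:BLcommutatorestimate} and~\ref{l:2formcommlemma} rather than \S\ref{ss:examples}.
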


Another application of Theorem \ref{Thm1} is to the Yang--Mills flow.
Fundamental work of Chen--Shen \cite{ChenShen}, Struwe, \cite{Struwe}, Schlatter \cite{Schlatter}, Kozono--Maeda--Naito \cite{KMN} shows that finite
time singularities of the Yang--Mills flow occur via energy concentration, and
moreover bubbling limits can be constructed which are Yang--Mills connections
over $(\mathbb{S}^4, g_{\mathbb{S}^4})$.  Although not explicitly stated, an
immediate corollary
of those works is a global existence and convergence statement for connections
over the trivial bundle with sufficiently small energy, where this constant depends on a gap theorem for connections on $\mathbb{S}^4$.  Thus Theorem \ref{Thm1} gives a statement about Yang--Mills flow which is much stronger than that previously attainable, yielding a computable universal
threshold below
which the flow must exist globally and converge.  In the case of $(\mathbb{S}^4, g_{\mathbb{S}^4})$ the limit is flat, and the corollary gives the sharp energy inequality which guarantees convergence to a flat connection.  Note
that
the hypothesis that the bundle is trivial is a necessary consequence of the
energy upper bound hypothesis, but we state it for clarity.

\begin{cor} \label{cor2} Let $(X^4,g)$ be a closed, oriented, four-manifold.
Suppose $E \to X^4$ is a trivial bundle with structure group $G \subset
\SO(E)$.
Suppose $\N$ is a connection on $E$ with curvature $F_{\N}$ satisfying
\begin{align} \label{flowbound}
\int_{X} \brs{F_{\N}}^2\, dV_g <&\ 16 \pi^2.
\end{align}
Then the solution to Yang--Mills flow with initial condition $\N$ exists on
$[0,\infty)$, and converges as $t \to \infty$ to a Yang--Mills connection.  If $(X^4, g) \cong (\mathbb{S}^4, g_{\mathbb{S}^4})$ then the limit is flat.
\end{cor}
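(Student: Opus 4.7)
The approach is to combine Corollary~\ref{cor1} with the bubbling analysis for Yang--Mills flow developed in \cite{ChenShen,Struwe,Schlatter,KMN}. First I would appeal to short-time existence to produce a solution on a maximal interval $[0,T)$, along which the Yang--Mills energy is nonincreasing. If $T<\infty$, the cited works show that curvature concentrates at finitely many points of $X$, and suitable rescalings about each such point subconverge modulo gauge to a non-flat Yang--Mills connection $\nabla_\infty$ on some bundle $E_\infty\to(\mathbb{S}^4, g_{\mathbb{S}^4})$, whose energy is bounded above by the initial energy of the flow, hence strictly less than $16\pi^2$.

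The crux is then to derive a contradiction by showing that any non-flat Yang--Mills connection on $(\mathbb{S}^4, g_{\mathbb{S}^4})$ has energy at least $16\pi^2$. If $\nabla_\infty$ is an (anti-)instanton, then non-flatness forces $\kappa(E_\infty)\neq 0$, and Chern--Weil (see (\ref{CW})) gives $\int\brs{F_{\nabla_\infty}}^2 = 16\pi^2\brs{\kappa(E_\infty)} \geq 16\pi^2$. Otherwise Corollary~\ref{cor1} applies, and, using $Y([g_{\mathbb{S}^4}])^2 = 384\pi^2$, yields
\[
\int_{\mathbb{S}^4}\brs{F_{\nabla_\infty}}^2 \, dV_{g_{\mathbb{S}^4}} \geq 16\pi^2\brs{\kappa(E_\infty)} + \tfrac{Y([g_{\mathbb{S}^4}])^2}{12} \geq 32\pi^2.
\]
Either case contradicts the bound on the bubble's energy, so $T=\infty$.

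With global existence established, a standard subsequential gauge-compactness argument extracts a Yang--Mills limit, and convergence of the full flow (as opposed to merely subsequential) follows from a \L{}ojasiewicz--Simon inequality for the Yang--Mills functional. In the case $(X,g)\cong(\mathbb{S}^4, g_{\mathbb{S}^4})$, this limit is a Yang--Mills connection on a trivial bundle over $\mathbb{S}^4$ with energy $<16\pi^2$, and applying the dichotomy above to the limit itself forces flatness: an instanton on a trivial bundle has $\kappa=0$ and hence $F\equiv 0$, while a non-instanton would violate the strict energy bound by having energy at least $32\pi^2$.

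The main conceptual obstacle is already resolved by the cited bubbling machinery, which reduces the entire statement to the existence of a sharp quantitative energy gap for Yang--Mills connections on $(\mathbb{S}^4, g_{\mathbb{S}^4})$. The novelty is precisely that Corollary~\ref{cor1} delivers this gap in explicit, computable form --- namely $16\pi^2$ --- in place of the implicit $\varepsilon$-regularity constants used in prior convergence results.
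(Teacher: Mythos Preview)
Your proposal is correct and follows essentially the same approach as the paper: invoke the Struwe--Schlatter bubbling machinery, use Corollary~\ref{cor1} together with Chern--Weil to show any nonflat Yang--Mills connection on $(\mathbb{S}^4,g_{\mathbb{S}^4})$ has energy at least $16\pi^2$, rule out bubbles, and upgrade subsequential convergence via \L{}ojasiewicz--Simon. The only minor point is that the same bubble-energy argument should also be invoked to rule out energy concentration at \emph{infinite} time (not just $T<\infty$) before the subsequential limit is guaranteed to be smooth on the original bundle; the paper treats both cases uniformly, and your argument extends to this case without change.
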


\noindent \textbf{Acknowledgements:} The authors thank Paul Feehan, Alex Waldron, and referees for helpful comments.

\section{Preliminary inequalities}

In this section we establish preliminary inequalities necessary for the proof
of
Theorem \ref{Thm1}.
In particular, in \S \ref{ss:smi}
we show sharp matrix inequalities needed to estimate nonlinear terms
which arise.  We also establish an improved Kato
inequality for Lie algebra valued $2$-forms in \S \ref{ss:Kato} which yields
favorable inequalities for small powers of $\brs{F^+_{\N}}$.  To
set the stage, we restate the fundamental B\"ochner formula here for
convenience.  First let us fix certain conventions.  For sections of
$g_{E}$ we define the canonical inner product
\begin{align} \label{f:endoIP}
\IP{A,B} := - \tfrac{1}{2} \tr(AB).
\end{align}
The factor of $\tfrac{1}{2}$ is in keeping with the convention of
Bourguignon-Lawson (\cite{BL} (2.14)).  This inner product is positive definite
since the only Lie algebras we consider satisfy $\mathfrak g_E \subset
\mathfrak{so}_E$, the space of skew symmetric endomorphisms of $E$.  In the
Bochner formula below, the inner products use the given Riemannian metric in
conjunction with (\ref{f:endoIP}).

\begin{lemma} \label{l:Bochner} (\cite{BW}, \cite{BL} Theorem 3.10) Let $(X^4, g)$ be a Riemannian manifold, and
suppose $E \to X$ is a smooth vector bundle with connection $\N$.  If $\gw \in
\Lambda_+^2(\mathfrak g_E)$ is a harmonic two-form, one has
\begin{align} \label{f:genBochner}
\tfrac{1}{2} \Delta |\gw|^2 = |\nabla \gw|^2 - \langle \gw, [
F^{+},
\gw ] \rangle - 2 \langle \gw, W^{+} \star \gw \rangle +
\tfrac{1}{3} R \brs{\gw}^2,
\end{align}
where with respect to local bases one has, for $P,Q \in \Lambda^2_+(\mathfrak
g_E)$, the tensor $[P,Q] \in \Lambda^2_+(\mathfrak g_E)$ defined via
\begin{align} \label{f:2formcommutator}
[P,Q]_{ij \ga}^{\gb} := g^{kl} \left( P_{ik \gd}^{\gb} Q_{j l \ga}^{\gd} - P_{i
k \ga}^{\gd} Q_{j l \gd}^{\gb} - P_{jk \gd}^{\gb} Q_{i l \ga}^{\gd} + P_{j k
\ga}^{\gd} Q_{i l \gd}^{\gb} \right),
\end{align}
and
\begin{align*}
(W^+ \star \gw)_{ij \ga}^{\gb} :=&\ g^{kp} g^{lq} W^+_{ijkl} \gw_{pq
\ga}^{\gb}.
\end{align*}
\end{lemma}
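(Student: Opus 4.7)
The plan is to combine the standard Bochner identity $\tfrac{1}{2}\Delta|\gw|^2 = |\N\gw|^2 - \langle \N^*\N\gw, \gw\rangle$ with a twisted Weitzenb\"ock formula expressing $\N^*\N\gw$ in terms of $\gw$ itself together with curvature contractions of the base and of the bundle, which becomes usable precisely because $\gw$ is assumed harmonic.

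To derive the Weitzenb\"ock identity I would let $D_{\N}$ denote the twisted exterior derivative on $\Lambda^*(\mathfrak g_E)$ and expand $D_{\N}^*D_{\N} + D_{\N}D_{\N}^*$ in local coordinates via the Ricci identity. This yields a decomposition of the form
\[
D_{\N}^*D_{\N} + D_{\N}D_{\N}^* = \N^*\N + \mathcal R_g + \mathcal C(F_{\N}, \cdot),
\]
where $\mathcal R_g$ is the Riemannian curvature operator acting on $\mathfrak g_E$-valued $2$-forms and $\mathcal C(F_{\N},\cdot)$ is the quadratic term in $F_{\N}$ produced by commuting two twisted covariant derivatives. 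Harmonicity of $\gw$ then gives $\N^*\N\gw = -\mathcal R_g(\gw) - \mathcal C(F_{\N},\gw)$, which I substitute into the Bochner identity above.

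Next I would specialize to the self-dual part in dimension $4$. The standard decomposition of the Riemann curvature operator on $\Lambda^2$ (cf.\ \cite{BL}, \S 2) gives $\mathcal R_g|_{\Lambda^2_+} = \tfrac{R}{3}\Id - 2W^+$, so that pairing with the self-dual form $\gw$ produces precisely the $\tfrac{R}{3}|\gw|^2 - 2\langle \gw, W^+ \star \gw\rangle$ terms appearing in (\ref{f:genBochner}). For the bundle contribution, a direct local-coordinate computation identifies $\mathcal C(F_{\N},\gw)$ with the bracket (\ref{f:2formcommutator}) applied to $F_{\N}$ and $\gw$; self-duality of $\gw$ then annihilates the anti-self-dual component of $F_{\N}$ upon pairing with $\gw$, effectively replacing $F_{\N}$ by $F^+$ inside the bracket. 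Assembling the three contributions yields (\ref{f:genBochner}).

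The main obstacle I expect is bookkeeping: tracking signs and normalizations inside $\mathcal C(F_{\N},\cdot)$ so as to match the specific form of the bracket (\ref{f:2formcommutator}) with the factor of $\tfrac{1}{2}$ built into the inner product (\ref{f:endoIP}), and verifying rigorously that the $F^-$ contribution drops out upon pairing with a self-dual $\gw$. Both steps reduce to linear-algebraic computations on $\Lambda^2 \otimes \mathfrak g_E$ using the self-duality projectors, after which the formula follows as in \cite{BW} and \cite{BL}.
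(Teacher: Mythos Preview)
The paper does not actually prove this lemma: it is stated with citations to \cite{BW} and \cite{BL} Theorem~3.10 and no argument is given in the text. Your sketch is precisely the standard derivation one finds in those references---the twisted Weitzenb\"ock identity $D_{\N}^*D_{\N}+D_{\N}D_{\N}^* = \N^*\N + \mathcal R_g + \mathcal C(F_{\N},\cdot)$, the block decomposition of the Riemann curvature operator on $\Lambda^2_+$ in dimension four giving the $\tfrac{R}{3}$ and $W^+$ terms, and the observation that the anti-self-dual piece of $F_{\N}$ drops out upon pairing with a self-dual $\gw$---so there is nothing in the paper to compare against, and your outline is correct and consistent with the cited sources.
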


\subsection{Sharp matrix inequalities} \label{ss:smi}

In this subsection we establish estimates for the curvature and
covariant derivative terms in (\ref{f:genBochner}).  First we estimate the Weyl
curvature term.  As the action is induced from the natural action on real
valued self-dual two-forms, the proof is a straightforward modification of that
case.

\begin{lemma} \label{sharpW} Let $(X^4, g)$ be a Riemannian manifold, and
suppose $E \to X$ is a smooth vector bundle with connection $\N$.  If $\gw \in
\Lambda_+^2({\mathfrak g}_E)$ is a two-form, one has
\begin{align} \label{WFF}
\brs{ \langle \gw, W^{+} \star \gw \rangle} \leq \tfrac{2}{\sqrt{6}}
|\Wp| |\gw|^2.
 \end{align}
 \end{lemma}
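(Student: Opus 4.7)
The plan is to reduce the bundle-valued statement to the classical pointwise bound for $W^+$ acting on real self-dual $2$-forms, using the fact that in the definition of $W^{+} \star$ the Weyl tensor touches only the form indices $ij,pq$ and acts as the identity on the $\mathfrak g_E$-indices $\alpha,\beta$. Thus the $\mathfrak g_E$-part of $\gw$ can essentially be carried along as a weight.

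Working at an arbitrary point $p \in X$, I would first regard $W^+$ as the symmetric, trace-free endomorphism of the three-dimensional space $\Lambda^2_+|_p$ determined by $\eta \mapsto W^+ \star \eta$. Choose a $g$-orthonormal eigenbasis $\{e_1,e_2,e_3\}$ of $\Lambda^2_+|_p$ with corresponding real eigenvalues $\lambda_1,\lambda_2,\lambda_3$ satisfying
\begin{align*}
\lambda_1 + \lambda_2 + \lambda_3 = 0, \qquad \lambda_1^2 + \lambda_2^2 + \lambda_3^2 = |W^+|^2.
\end{align*}
Decompose $\gw$ uniquely at $p$ as $\gw = \sum_{i=1}^{3} e_i \otimes \xi_i$ with $\xi_i \in \mathfrak g_E|_p$. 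From the definition of $W^+ \star$ one then obtains $W^+ \star \gw = \sum_i \lambda_i\, e_i \otimes \xi_i$, and since the full inner product combines the metric on $\Lambda^2$ with (\ref{f:endoIP}) on $\mathfrak g_E$, the cross-terms in the bases $\{e_i\}$ vanish, giving
\begin{align*}
\langle \gw, W^+ \star \gw \rangle = \sum_{i=1}^{3} \lambda_i\, |\xi_i|^2, \qquad |\gw|^2 = \sum_{i=1}^{3} |\xi_i|^2.
\end{align*}

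From here the estimate is purely linear algebra. Since $(t_1,t_2,t_3) \mapsto \sum_i \lambda_i t_i$ is linear on the simplex $\{t_i \geq 0,\ \sum t_i = 1\}$, its absolute maximum is attained at a vertex, so $|\langle \gw, W^+ \star \gw\rangle| \leq (\max_i |\lambda_i|)\, |\gw|^2$. To finish I would establish the sharp bound $\max_i |\lambda_i| \leq \tfrac{2}{\sqrt 6}\,|W^+|$ by the standard one-variable optimization: fixing $\lambda_1$ as the extremal eigenvalue, the two constraints force $(\lambda_2 - \lambda_3)^2 = 2 |W^+|^2 - 3\lambda_1^2 \geq 0$, which yields $\lambda_1^2 \leq \tfrac{2}{3} |W^+|^2$ with equality precisely when $\lambda_2 = \lambda_3 = -\tfrac{1}{2}\lambda_1$.

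Combining these three ingredients gives (\ref{WFF}). I do not expect any real obstacle; the only step requiring attention is verifying that the pairing on $\mathfrak g_E$ really does separate cleanly along the chosen eigenbasis of $W^+$, so that no extra constant from the normalization of the self-dual inner product (\ref{f:endoIP}) or from the factor $g^{kp}g^{lq}$ creeps in. This is essentially a bookkeeping check, identical in spirit to the scalar self-dual case that the authors are referring to.
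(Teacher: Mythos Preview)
Your proof is correct and is precisely the argument the paper has in mind: the authors simply invoke the standard fact that a trace-free symmetric endomorphism of a rank-$3$ bundle has operator norm at most $\tfrac{2}{\sqrt{6}}$ times its Hilbert--Schmidt norm (citing \cite{SW}), and you have written out that computation explicitly. The decomposition $\gw = \sum e_i \otimes \xi_i$ you use to reduce to the scalar case is exactly what the paper leaves implicit.
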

\begin{proof} This follows directly from the fact that $W^+$ is a trace-free
endomorphism of $\Lambda^2_+$, a rank $3$ vector bundle (cf. \cite{SW} p 234).
\end{proof}

The estimate of the bracket term in (\ref{f:genBochner}) will depend on the
Lie
algebra.  We
define two constants relevant to understanding the bracket term in the
B\"ochner
formula.

\begin{defn} \label{d:gammaconsts} Given $\mathfrak g \subset \mathfrak
{so}_E$, let
\begin{align*}
\gamma_{0} :=&\ \sup_{A,B \in \mathfrak g \backslash \{0\}}
\frac{\brs{[A,B]}}{\brs{A} \brs{B}},\\
\gamma_1 :=&\ \sup_{\omega \in \Lambda^2_{+}(\mathfrak{g}) \setminus \{0\}}
\dfrac{ \langle \omega, [\omega, \omega]\rangle }{|\omega|^3}.
\end{align*}
\end{defn}

These suprema are certainly attained as the quantities are scale invariant and
defined on finite dimensional vector spaces.  A crucial point however is that
these quantities depend on the choice of metric on
$\mathfrak g_E$.  Thus the choice (\ref{f:endoIP}) is important to what
follows, and fixes the ``scale'' of various terms involving Lie algebra values.
 A fundamental lemma of Bourguignon-Lawson gives a universal upper bound for
$\gg_0$, which is further improvable for some special Lie algebras.

\begin{lemma} \label{l:BLcommutatorestimate} (\cite{BL} Lemma 2.30) Given
$\mathfrak g \subset \mathfrak{so}_E$, one has $\gg_{0} \leq \sqrt{2}$, with
the supremum defining $\gg_{0}$ attained by pairs $A, B$ which are
simultaneously equivalent to a pair of Pauli matrices, i.e.
\begin{align}\label{eq:Pauli}
\newcommand*{\temp}{\multicolumn{1}{c|}{0}}
A =
\left(\begin{array}{c|c}
\begin{array}{c|c}
  \begin{matrix} 0 & t \\ -t & 0\end{matrix}  &0\\ \cline{1-2}
    0 &  \begin{matrix} 0 & t \\ -t & 0\end{matrix} \\
   \end{array} & 0   \\ \cline{1-2}  
    \temp & 0
\end{array}\right), \qquad
B = \left(\begin{array}{c|c}
\begin{array}{c|c}
0  &  \begin{matrix} -s & 0 \\ 0 & s\end{matrix}\\ \cline{1-2}
 \begin{matrix} s & 0 \\ 0 & -s\end{matrix} &  0\\
   \end{array} & 0   \\ \cline{1-2}  
 \temp & 0
\end{array}\right).\end{align}
Furthermore, in the case $\mathfrak g = \mathfrak {so}_3$ one has $\gg_0 = 1$.
\end{lemma}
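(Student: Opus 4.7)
The plan is to first reduce to the case $\mathfrak{g} = \mathfrak{so}_E$ (since $\gg_0$ is monotone under inclusion of Lie subalgebras) and then estimate $\brs{[A,B]}^2$ via the canonical block form of a skew-symmetric endomorphism. I would fix $A, B \in \mathfrak{so}_E$ with $\brs{A} = \brs{B} = 1$ and choose an orthonormal basis of $E$ in which $A$ is block-diagonal with $2\times 2$ blocks $A_i = a_i J$, where $J = \bigl(\begin{smallmatrix} 0 & 1 \\ -1 & 0 \end{smallmatrix}\bigr)$ and $\sum_i a_i^2 = 1$ (a trailing zero row in odd dimension is handled analogously).

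Partitioning $B$ into $2\times 2$ blocks $B_{ij}$ in the same basis, skew-symmetry forces $B_{ji} = -B_{ij}^T$, so each $B_{ii} \in \mathfrak{so}(2)$ commutes with $A_i$ and the diagonal blocks of $[A,B]$ vanish. Using $\brs{M}^2 = \tfrac{1}{2}\|M\|_F^2$ for skew $M$, this yields
\begin{align*}
\brs{[A,B]}^2 = \sum_{i<j} \|a_i J B_{ij} - a_j B_{ij} J\|_F^2.
\end{align*}
A short $2 \times 2$ computation gives the identity $\langle J X, X J \rangle_F = 2 \det X$ for $X \in \mathbb{R}^{2 \times 2}$, whence
\begin{align*}
\|a_i J B_{ij} - a_j B_{ij} J\|_F^2 = (a_i^2 + a_j^2) \|B_{ij}\|_F^2 - 4 a_i a_j \det B_{ij}.
\end{align*}
The elementary bound $\brs{\det X} \leq \tfrac{1}{2} \|X\|_F^2$ then gives $\|[A,B]_{ij}\|_F^2 \leq (a_i + a_j)^2 \|B_{ij}\|_F^2$. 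To finish, I would note $(a_i + a_j)^2 \leq 2(a_i^2 + a_j^2) \leq 2 \sum_k a_k^2 = 2$ and $2 \sum_{i<j} \|B_{ij}\|_F^2 \leq \|B\|_F^2 = 2 \brs{B}^2$, which combine to give $\brs{[A,B]}^2 \leq 2 \brs{A}^2 \brs{B}^2$, i.e., $\gg_0 \leq \sqrt{2}$.

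For the equality characterization I would trace back through the above: saturating $(a_i+a_j)^2 \leq 2$ forces $A$ to have exactly two nonzero blocks of equal weight $1/\sqrt{2}$; $B_{ii} = 0$ emerges from saturating $\sum_i \|B_{ii}\|_F^2 \geq 0$; saturating $\brs{\det X} \leq \tfrac{1}{2}\|X\|_F^2$ forces the unique nontrivial off-diagonal block $B_{12}$ to be symmetric with a specific eigenstructure; and the remaining blocks of $B$ must vanish. A further orthogonal change of basis then places $(A,B)$ in the Pauli form \eqref{eq:Pauli}. The $\mathfrak{so}_3$ statement follows as a direct byproduct: the canonical form of $A \in \mathfrak{so}_3$ has a single nontrivial $2 \times 2$ block with a trivial third row and column, and a quick computation shows $\brs{[A,B]}^2$ captures only two of the three independent parameters of $B$, yielding $\gg_0(\mathfrak{so}_3) = 1$. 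The main subtlety, and the reason a naive Cauchy--Schwarz only gives $\brs{[A,B]} \leq 2 \brs{A}\brs{B}$, is that the factor-of-$\sqrt{2}$ improvement requires simultaneously exploiting two distinct features: the vanishing of diagonal-block contributions (since $\mathfrak{so}(2)$ is abelian) and the tight interplay between $\det B_{ij}$ and $\|B_{ij}\|_F^2$ on off-diagonal blocks.
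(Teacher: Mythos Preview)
The paper does not supply its own proof of this lemma; it simply quotes the result from Bourguignon--Lawson \cite{BL}, Lemma 2.30. Your argument is correct and is essentially the same as the original Bourguignon--Lawson proof: put $A$ in its canonical $2\times 2$ block-diagonal skew form, observe that the diagonal blocks of $[A,B]$ vanish because $\mathfrak{so}(2)$ is abelian, and estimate each off-diagonal block via the identity $\langle JX, XJ\rangle_F = 2\det X$ together with $\brs{\det X} \leq \tfrac{1}{2}\|X\|_F^2$.

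One cosmetic remark: your bound $(a_i+a_j)^2$ should be $(|a_i|+|a_j|)^2$ in general, since the sign of $-4a_ia_j\det B_{ij}$ is not controlled a priori; this is harmless because one may arrange $a_i \geq 0$ by a further orthogonal change of basis, but it is worth stating. Your equality analysis and the $\mathfrak{so}_3$ computation are both fine.
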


Next, using Lemma \ref{l:BLcommutatorestimate}, we establish an estimate for
the wedge product commutator of Lie algebra valued $2$-forms.  We first record an
elementary linear algebra fact.  Note here that we take the metric induced on
two-forms as
\begin{align*}
 \IP{\eta,\mu} := g^{ik} g^{jl} \eta_{ij} \mu_{kl}.
\end{align*}

\begin{lemma} \label{l:circbasis} Given $(V^4, g)$ an inner product space, for
$\eta,\mu \in \Lambda^2_+(V^*)$ define $(\eta \circ \mu) \in \Lambda^2_+(V^*)$
via
\begin{align*}
(\eta \circ \mu)_{ij} = g^{kl} \left( \eta_{ik} \mu_{jl} - \eta_{jk} \mu_{il}
\right).
\end{align*}
Then given $\{e_1,e_2,e_3\}$ an orthonormal basis for $\Lambda^2_+(V^*)$, one
has that $\{ (e_1 \circ e_2), (e_1 \circ e_3), (e_2 \circ e_3) \}$ is another
orthonormal basis.
\end{lemma}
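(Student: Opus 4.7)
The plan is to reduce the statement to a single explicit calculation by exploiting naturality. First I would show the identity
\begin{align*}
(\eta \circ \mu)_{ij} = -[\eta,\mu]_{ij},
\end{align*}
where $\eta$ and $\mu$ are viewed as skew-symmetric endomorphisms of $V$ via index raising, and $[\cdot,\cdot]$ is the usual commutator of endomorphisms. This is a short rewriting that uses only the skew-symmetry of $\eta$ and $\mu$, and already implies that $\eta \circ \mu$ is itself skew (hence a $2$-form). The containment $\eta \circ \mu \in \Lambda_+^2(V^*)$ then follows from the classical fact that $\Lambda_+^2 \subset \mathfrak{so}(V)$ is a Lie subalgebra, corresponding to the decomposition $\mathfrak{so}(4) \cong \mathfrak{so}(3) \oplus \mathfrak{so}(3)$.

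Next I would observe that the bilinear operation $\circ$ is manifestly $O(V,g)$-equivariant. The induced representation $SO(V,g) \to SO(\Lambda_+^2)$ is the classical surjection arising from the identification above, and in particular acts transitively on positively-oriented orthonormal bases of $\Lambda_+^2(V^*)$. By equivariance, it therefore suffices to verify the lemma in one conveniently-chosen basis; reversing orientation only permutes the three products $e_i \circ e_j$ and possibly changes their signs, which is harmless for the orthonormal basis conclusion.

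I would then take $V = \mathbb{R}^4$ with the standard metric and coordinates $\{x^1,\ldots,x^4\}$ and use the standard positively-oriented orthonormal basis
\begin{align*}
e_1 = \tfrac{1}{2}(dx^1\wedge dx^2 + dx^3\wedge dx^4),\quad
e_2 = \tfrac{1}{2}(dx^1\wedge dx^3 - dx^2\wedge dx^4),\quad
e_3 = \tfrac{1}{2}(dx^1\wedge dx^4 + dx^2\wedge dx^3),
\end{align*}
normalized with respect to the convention $\langle \eta,\mu \rangle = g^{ik} g^{jl} \eta_{ij} \mu_{kl}$ used in the paper. A direct matrix computation of the endomorphism commutators from the first step then yields $e_i \circ e_j = \pm e_k$ whenever $\{i,j,k\} = \{1,2,3\}$, which gives the claim.

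The only real obstacle in executing this plan is bookkeeping with normalization constants: the stated inner product convention differs by a factor of $2$ from the more common one indexed by $i<j$, so one must be careful to normalize the standard basis properly in order for the outputs of $\circ$ to have unit length. Conceptually, the lemma amounts to the fact that the cross product of two elements of an orthonormal basis of $\mathrm{Im}(\mathbb{H}) \cong \Lambda_+^2$ is again a unit vector orthogonal to both, and the argument above simply makes this quaternionic picture precise.
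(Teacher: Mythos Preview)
Your proposal is correct. The paper itself does not give a proof of this lemma; it simply records it as ``an elementary linear algebra fact'' and moves directly to Lemma~\ref{l:2formcommlemma}. So there is nothing to compare against, and your argument stands on its own.

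A couple of comments confirming the details: your identity $(\eta\circ\mu)=-[\eta,\mu]$ is indeed a one-line consequence of skew-symmetry, and with the paper's convention $\langle\eta,\mu\rangle=g^{ik}g^{jl}\eta_{ij}\mu_{kl}$ your normalization $e_a=\tfrac12(dx^{ij}\pm dx^{kl})$ is the right one, so the explicit commutator computation gives $e_a\circ e_b=\pm e_c$ on the nose. The equivariance/transitivity reduction is also fine: $SO(4)$ surjects onto $SO(\Lambda^2_+)$, and your remark about sign changes covers the negatively-oriented bases of $\Lambda^2_+$, which is the only case not reached by that surjection.
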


\begin{lemma} \label{l:2formcommlemma} Let $E \to (M^4,g)$ be a smooth vector
bundle.  Given $P \in \Lambda^2_+(\mathfrak g_E)$, one has
\begin{align*}
\brs{[P,P]} \leq \tfrac{2}{\sqrt{3}} \gg_{0} \brs{P}^2.
\end{align*}
\begin{proof} Fix a point $p \in M$ and let $\{e_i\}, i = 1,2,3$ be an
orthonormal basis for $\Lambda^2_+$ at $p$.  Now let us express $P = P^i e_i$,
where $P^i \in \mathfrak g_E$.  Due to the skew commutativity of the bracket
structure we may estimate
\begin{align}\label{eq:Pbrack}
\begin{split}
\brs{[P,P]}^2 =&\ \brs{[ P^i e_i, P^j e_j]}^2\\
=&\ \brs{ 2(e_1 \circ e_2) \otimes [P^1,P^2] + 2 (e_1 \circ e_3) \otimes
[P^1,P^3] + 2 (e_2 \circ e_3) \otimes [P^2,P^3]}^2\\
\leq&\ 4 \left( \brs{[ P^1,P^2]}^2 + \brs{[P^1,P^3]}^2 + \brs{[P^2,P^3]}^2
\right)\\
\leq&\ 4 \gg_0^2 \left( \brs{P^1}^2 \brs{P^2}^2 + \brs{P^1}^2 \brs{P^3}^3 +
\brs{P^2}^2 \brs{P^3}^2 \right)\\
\leq&\ \tfrac{4 \gg_0^2}{3} \left( \brs{P^1}^2 + \brs{P^2}^2 + \brs{P^3}^2
\right)^2\\
=&\ \tfrac{4 \gg_0^2}{3} \brs{P}^4.
\end{split}
\end{align}
Taking the square root yields the claim.
\end{proof}
\end{lemma}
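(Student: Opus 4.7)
The plan is to reduce the estimate on the bundle-valued quantity $[P,P]$ to the combination of two ingredients: the scalar-valued Bourguignon--Lawson bracket bound recorded as $\gamma_0$, and the orthogonality structure on $\Lambda^2_+$ provided by Lemma \ref{l:circbasis}. The first step will be to work pointwise at $p \in M$ and to fix an orthonormal basis $\{e_1, e_2, e_3\}$ of $\Lambda^2_+(T_p^*M)$, writing
\begin{equation*}
P = \sum_{i=1}^3 P^i \, e_i, \qquad P^i \in \mathfrak g_E,
\end{equation*}
so that the coefficients $P^i$ are genuine Lie algebra elements and $|P|^2 = \sum_i |P^i|^2$ by orthonormality.

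Next I would expand $[P,P]$ directly using the coordinate formula \eqref{f:2formcommutator}. The two-form indices $(i,j,k,l)$ and the Lie algebra indices $(\alpha,\beta,\delta)$ separate: the index contractions in the Lie algebra slot combine into commutators $[P^m,P^n]$, and the remaining index contractions on the 2-form slot are exactly the $\circ$-product of Lemma \ref{l:circbasis} applied to the basis elements $e_m, e_n$. The antisymmetry of both the Lie bracket and the $\circ$-product in $(m,n)$ will collapse the sum into
\begin{equation*}
[P,P] = 2 \sum_{m<n} [P^m, P^n] \otimes (e_m \circ e_n),
\end{equation*}
which is the content of the identity appearing in \eqref{eq:Pbrack}. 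This reformulation is really the heart of the argument; carrying out the indicial expansion carefully so that the commutator structure is manifest is the step I expect to require the most care.

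From this point the proof is essentially mechanical. Lemma \ref{l:circbasis} says that $\{e_1\circ e_2, e_1 \circ e_3, e_2 \circ e_3\}$ is again an orthonormal basis of $\Lambda^2_+$, so the three tensors on the right-hand side are mutually orthogonal and the Pythagorean theorem yields
\begin{equation*}
|[P,P]|^2 = 4\bigl(|[P^1,P^2]|^2 + |[P^1,P^3]|^2 + |[P^2,P^3]|^2\bigr).
\end{equation*}
Invoking the definition of $\gamma_0$ termwise gives $|[P^i,P^j]|^2 \le \gamma_0^2 |P^i|^2 |P^j|^2$, so the problem is reduced to the scalar inequality
\begin{equation*}
|P^1|^2|P^2|^2 + |P^1|^2|P^3|^2 + |P^2|^2|P^3|^2 \le \tfrac{1}{3}\bigl(|P^1|^2 + |P^2|^2 + |P^3|^2\bigr)^2.
\end{equation*}
This last step is the elementary inequality $ab+ac+bc \le \tfrac{1}{3}(a+b+c)^2$ for $a,b,c \ge 0$, which follows from $a^2+b^2+c^2 \ge ab+ac+bc$. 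Combining these three scalar bounds yields $|[P,P]|^2 \le \tfrac{4\gamma_0^2}{3}|P|^4$ and taking square roots gives the claim.
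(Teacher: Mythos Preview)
Your proposal is correct and follows essentially the same approach as the paper: both expand $P$ in an orthonormal basis of $\Lambda^2_+$, use Lemma~\ref{l:circbasis} to identify $[P,P]$ as an orthogonal sum over the $e_m\circ e_n$, apply the $\gamma_0$ bound termwise, and finish with the elementary inequality $ab+ac+bc\le\tfrac13(a+b+c)^2$. Your write-up is in fact slightly sharper in one place, recording the Pythagorean step as an equality rather than an inequality.
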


\begin{rmk} \label{r:BPST} We note that equality in Lemma
\ref{l:2formcommlemma} is achieved by the curvature of the standard ADHM/BPST
instanton.  At a fixed point this tensor takes the form (cf. \eqref{f:BPSTcurv})
\begin{align*}
F_{\N} = \gl \left\{ \left( dx^{12} + dx^{34}
\right) \otimes \textbf{i} + \left( dx^{13} - dx^{24} \right) \otimes
\textbf{j}
+ \left( dx^{14}
+ dx^{23} \right) \otimes \textbf{k} \right\}.
\end{align*}
Note that, in the notation of Lemma \ref{l:2formcommlemma}, this curvature is
expressed as $e_i P^i = e_1 \textbf{i} + e_2 \textbf{j} + e_3 \textbf{k}$ for
$\{e_1,e_2,e_3\}$ the standard basis for $\Lambda^2_+$.  Using the quaternion
relations, it is clear that the pairwise commutators between the $P^i$ are thus
orthogonal, making the third line of \eqref{eq:Pbrack} an equality.  These matrices $P^i$ are Pauli matrices,
so by Lemma \ref{l:BLcommutatorestimate} the fourth line is also an equality.
Lastly, as each $P^i$ has the same norm, the fifth line of \eqref{eq:Pbrack} is an equality.
\end{rmk}

\subsection{Improved Kato inequality} \label{ss:Kato}

In this subsection we prove a sharp Kato inequality for Lie algebra valued
harmonic two-forms on four-manifolds.  This was proved for Yang--Mills
connections on $\mathbb{R}^4$ in \cite{Rade}.  Our proof is an elementary
modification of the method of Seaman \cite{Seaman}, who showed a sharp Kato
inequality for harmonic real valued two-forms on four-manifolds, and exploited
it to derive vanishing results for positively curved four-manifolds.  Seaman's
method exploits the conformal invariance of harmonic two-forms in four
dimensions, together with a delicate comparison of the B\"ochner formula for
two
choices of conformal factor.  The Yang--Mills equation is also conformally
invariant in four-dimensions, and the relevant Bochner formula only differs
by
a conformally invariant term, the proof is adapted in a straightforward manner.

\begin{prop} \label{sharpKato} Let $E \to (X^4, g)$ be a vector bundle over a
smooth Riemannian four-manifold.  Given $\N$ a connection on $E$, and $\gw \in
\Lambda^2(\mathfrak g_E)$ a harmonic two-form, one has the pointwise inequality
\begin{align} \label{SK}
\brs{\N \gw}^2 \geq \tfrac{3}{2} \brs{d \brs{\gw}}^2.
\end{align}
\begin{proof} First, by applying the Bochner formula (Lemma \ref{l:Bochner})
to
$\gw$ (summing over self-dual and antiself-dual parts) we obtain
\begin{align} \label{f:sharpKato10}
\tfrac{1}{2} \gD \brs{\gw}^2 = \brs{\N \gw}^2 + \IP{[F,\gw] - 2 W \star \gw +
\tfrac{R}{3} \gw,\gw}.
\end{align}
We now make a conformal modification of the metric and derive a second
B\"ochner
identity.  In particular, let $\hg = \brs{\gw} g$, which defines a smooth
Riemannian metric away from the zero locus of $\brs{\gw}$.  Note by
construction that $\brs{\gw}^2_{\hg} \equiv 1$.  Furthermore, as the condition
that $\gw$ is harmonic is conformally invariant, $\gw$ is harmonic with respect
to $\hg$, and thus we apply the Bochner formula again to conclude
\begin{align} \label{f:sharpKato20}
0 =&\ \tfrac{1}{2} \hat{\gD} \brs{\gw}^2_{\hg} = \brs{\hat{\N} \gw}^2_{\hg} +
\IP{[\hat{F},\gw] - 2 \hat{W} \star \gw + \tfrac{\hat{R}}{3} \gw,\gw}_{\hg}.
\end{align}
The bundle curvature $F$ and the Weyl tensor $W$ are conformally covariant,
yielding
\begin{align} \label{f:sharpKato30}
\IP{[\hat{F},\gw] - 2 \hat{W} \star \gw,\gw}_{\hg} = \brs{\gw}^{-3}
\IP{[F,\gw] - 2 W \star \gw,\gw}.
\end{align}
On the other hand, using the transformation formula for the scalar curvature
under conformal change one obtains
\begin{align} \label{f:sharpKato40}
\IP{\tfrac{\hat{R}}{3} \gw, \gw}_{\hg} = \brs{\gw}^{-3} \left(
\IP{\tfrac{R}{3} \gw,\gw} - \tfrac{1}{2} \gD \brs{\gw}^2 + \tfrac{3}{2} \brs{d
\brs{\gw}}^2 \right).
\end{align}
Plugging (\ref{f:sharpKato30}) and (\ref{f:sharpKato40}) into
(\ref{f:sharpKato20}), and incorporating (\ref{f:sharpKato10}) we conclude
\begin{align*}
\brs{\hat{\N} \gw}^2_{\hg} =&\ - \brs{\gw}^{-3} \left( \IP{[F,\gw] + W
\star \gw + \tfrac{R}{6} \gw,\gw} - \tfrac{1}{2} \gD \brs{\gw}^2 + \tfrac{3}{2}
\brs{d \brs{\gw}}^2 \right)\\
=&\ \brs{\gw}^{-3} \left( \brs{\N \gw}^2 - \tfrac{3}{2} \brs{d
\brs{\gw}}^2 \right).
\end{align*}
This implies the desired inequality away from the vanishing locus of
$\brs{\gw}$, which in turn implies the inequality at all points.
\end{proof}
\end{prop}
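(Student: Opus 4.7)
The plan is to follow Seaman's strategy from \cite{Seaman}, playing off two Bochner identities for $\gw$ against one another: one computed with $g$ and one with a carefully chosen conformally related metric, exploiting the conformal invariance of the harmonicity condition on $\mathfrak{g}_E$-valued two-forms in dimension four.

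First I would apply Lemma \ref{l:Bochner} to the $g$-harmonic two-form $\gw$ (summing over the self-dual and anti-self-dual parts) to obtain the standard identity
\begin{align*}
\tfrac{1}{2}\gD \brs{\gw}^2 = \brs{\N \gw}^2 + \IP{[F,\gw] - 2 W \star \gw + \tfrac{R}{3}\gw,\,\gw}.
\end{align*}
Next, away from the zero set of $\brs{\gw}$, I would introduce the conformally related metric $\hg := \brs{\gw}\, g$. This weighting is chosen precisely so that $\brs{\gw}^2_{\hg} \equiv 1$, which forces the left-hand side of the corresponding $\hg$-Bochner identity to vanish. Since harmonicity of bundle-valued two-forms in dimension four is conformally invariant, $\gw$ remains $\hg$-harmonic, and Lemma \ref{l:Bochner} applied in the $\hg$-geometry produces a second identity whose left-hand side is zero.

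The main step is to compare the two identities by pulling everything back into $g$-quantities. Conformal covariance of $F_{\N}$ and of the Weyl tensor yields $\IP{[\hat{F},\gw] - 2\hat{W}\star\gw,\,\gw}_{\hg} = \brs{\gw}^{-3}\IP{[F,\gw] - 2 W\star\gw,\,\gw}$, so these terms rescale by a uniform power of $\brs{\gw}$ and will cancel cleanly against their $g$-counterparts in the first identity. The nontrivial contribution comes from the scalar curvature: applying the dimension-four conformal transformation of $R$ with factor $u = \brs{\gw}^{1/2}$, together with $\gD \brs{\gw}^2 = 2\brs{\gw}\gD\brs{\gw} + 2\brs{d\brs{\gw}}^2$, the term $\IP{\tfrac{\hat{R}}{3}\gw,\gw}_{\hg}$ acquires an additional piece proportional to $\brs{d\brs{\gw}}^2$ with coefficient $\tfrac{3}{2}$. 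Substituting into the $\hg$-Bochner identity and subtracting the $g$-identity should produce
\begin{align*}
\brs{\hat{\N}\gw}^2_{\hg} = \brs{\gw}^{-3}\bigl(\brs{\N\gw}^2 - \tfrac{3}{2}\brs{d\brs{\gw}}^2\bigr),
\end{align*}
and non-negativity of the left-hand side yields (\ref{SK}) wherever $\brs{\gw}>0$; the inequality then extends to all of $X^4$ by continuity since both sides vanish on the zero set of $\brs{\gw}$.

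The main obstacle is really bookkeeping: the bundle-valued curvature and Weyl contributions must cancel \emph{exactly} between the two identities, and the dimension-four conformal transformation of $R$ must deliver precisely the sharp constant $\tfrac{3}{2}$. Because $F_{\N}$, $W$, and the harmonicity equation for $\gw$ are all conformally covariant in dimension four, the bundle-valued case should reduce to Seaman's real-valued argument with only the additional $[F,\gw]$ term to track; since that term carries the correct conformal weight it poses no new difficulty beyond care with the endomorphism inner product (\ref{f:endoIP}).
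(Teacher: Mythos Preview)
Your proposal is correct and follows essentially the same approach as the paper: both apply the Bochner formula first for $g$ and then for the conformal metric $\hg = \brs{\gw}\,g$, use conformal covariance of $F$ and $W$ to cancel those terms, and extract the constant $\tfrac{3}{2}$ from the scalar curvature transformation, arriving at exactly the identity $\brs{\hat{\N}\gw}^2_{\hg} = \brs{\gw}^{-3}\bigl(\brs{\N\gw}^2 - \tfrac{3}{2}\brs{d\brs{\gw}}^2\bigr)$.
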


\section{Main Proofs}

In this section we give the proof of Theorem \ref{Thm1}.  As discussed in the
introduction the proof involves a delicate application of ideas from conformal
geometry to the B\"ochner formula for $F^+_{\N}$.  The proof of Theorem
\ref{Thm1}
appears in \S \ref{ss:mainargument}.
Then in \S
\ref{ss:examples} we give an example illustrating the sharpness of the
estimate.
 We conclude in \S \ref{ss:corollaries} with the proofs of Corollaries
\ref{cor1} and \ref{cor2}.

\subsection{Proof of Theorem \ref{Thm1}} \label{ss:mainargument}

\begin{proof} Let $\N$ denote a Yang--Mills connection, and let us set $F =
F_{\N}$ for convenience.  Then $DF = 0, D^* F =
0$, and it follows easily that $\star F$ is also closed and co-closed,
hence $D
F^+ = 0$, $D^* F^+ = 0$.  Thus we may apply Lemma
\ref{l:Bochner} to $F^+$ to
yield
\begin{align} \label{Bochner}
\tfrac{1}{2} \Delta |F^{+}|^2 = |\nabla F^{+}|^2 - \langle F^{+}, [ F^{+},
F^{+}
] \rangle - 2 \langle F^{+}, W^{+} \star F^{+} \rangle + \tfrac{1}{3} R |\Fp|^2.
\end{align}
By the result of Lemma \ref{sharpW} and the definition of $\gamma_1$, we
immediately obtain
\begin{align} \label{B2}
\tfrac{1}{2} \Delta |F^{+}|^2 \geq |\nabla F^{+}|^2 + \tfrac{1}{3} ( R - 2
\sqrt{6} |\Wp| - 3 \gamma_1 |\Fp|) |\Fp|^2.
\end{align}
By the Leibniz rule, away from the zero locus of $\Fp$ we have
\begin{align*}
\tfrac{1}{2} \Delta |\Fp|^2 = |\Fp| \Delta |\Fp| + | \nabla |\Fp||^2,
\end{align*}
hence by (\ref{SK})
\begin{align}  \label{B2p}
\Delta |\Fp| \geq \tfrac{1}{2}\frac{ |\nabla |\Fp||^2}{|\Fp|} + \tfrac{1}{3} ( R
- 2 \sqrt{6} |\Wp| - 3 \gamma_1 |\Fp|) |\Fp|.
\end{align}
It follows that
\begin{align} \label{Ytest}
\Delta |\Fp|^{1/2} \geq \tfrac{1}{6} \big(  R - 2 \sqrt{6} |\Wp| - 3 \gamma_1
|\Fp| \big) |\Fp|^{1/2}
\end{align}
off the zero locus of $\Fp$, or in the sense of distributions.

We next exploit (\ref{Ytest}) in
conjunction with as a modified Yamabe problem introduced in \cite{G1}.  In
particular, given a metric $\hg \in [g]$ we define
\begin{align}  \label{Phidef}
\Phi_{\hg} = R_{\hg}  - 2 \sqrt{6} |\Wp|_{\hg} - 3 \gamma_1 |\Fp|_{\hg}.
\end{align}
If we drop the Weyl and $\Fp$-terms then $\Phi$ is just the scalar curvature.
Moreover, because these terms transform by scalings of the same weight under
conformal changes of metric, the transformation law for $\Phi$ is essentially
the same as the scalar curvature.  More precisely, if we define the natural
generalization of the conformal Laplacian by
\begin{align*}
L = - 6 \Delta + \Phi,
\end{align*}
then given $\hg = u^2 g$ it follows that
\begin{align} \label{PhiChange}
\Phi_{\hg} = u^{-3} L_g u.
\end{align}
Moreover, $L$ is conformally covariant:
\begin{align*}
L_{\hg} \phi = u^{-3} L_g (u \phi).
\end{align*}
Consequently, if $\lambda_1(L)$ denotes the first eigenvalue of $L$,
\begin{align} \label{LL}
\lambda_1(L_g) = \inf_{\phi \in C^{\infty}, \phi \neq 0} \dfrac{ \int_X \phi L_g
\phi\ dV_g }{\int_X \phi^2\ dV_g},
\end{align}
then the sign of $\lambda_1(g)$ is a conformal invariant.  In particular, by
using an eigenfunction associated with $\lambda_1(L)$ as a conformal factor, it
follows that $[g]$ admits a metric $\hg$ with $\Phi_{\hg} > 0$ ({\em resp.}, $=
0, < 0$) if and only if $\lambda_1(L_g) > 0$ ({\em resp.} $= 0, < 0$).

One departure from the classical Yamabe problem is that the modified scalar
curvature may only be Lipschitz continuous.  Therefore, the Schauder estimates
imply that the first eigenfunction is in $C^{2,\alpha}$ and hence defines a
conformal metric which is only $C^{2,\alpha}$.  One can smooth $|\Fp|$ and
approximate (see Section 3 of \cite{G1} for details), but in our setting this
will not be necessary.

Returning to the inequality (\ref{Ytest}), we can now express this as
\begin{align} \label{Ytest2}
0 \geq L_g ( |\Fp|^{1/2}).
\end{align}
Multiplying by $|\Fp|^{1/2}$ and integrating over $X^4$ gives
\begin{align*}
0 \geq \int_X |\Fp|^{1/2} L_g ( |\Fp|^{1/2})\, dV_g.
\end{align*}

It thus follows that either $F^+ \equiv 0$ or $\lambda_1(L_g) \leq 0$.  The
case $F^+ \equiv 0$ is case (1) of the statement, thus we proceed to analyze
the case $\gl_1(L_g) \leq 0$.  Let $\phi_1 > 0$ denote an eigenfunction
associated to $\lambda_1(L)$, and define the metric $\bar{g} = \phi_1^2 g$.  By
(\ref{PhiChange}),
\begin{align*}
\Phi_{\bg} = \phi_1^{-3} L_g \phi_1 = \lambda_1 \phi_1^{-2} \leq 0.
\end{align*}
Therefore,
\begin{align*}
0 \geq \int_X \Phi_{\bg} \, dV_{\bg} = \int_X \big( R_{\bg} - 2\sqrt{6} |\Wp| -
3
\gamma_1 |\Fp|_{\bg} \big) \, dV_{\bg},
\end{align*}
or
\begin{align} \label{L1}
\int_X R_{\bg} \, dV_{\bg} \leq 2 \sqrt{6} \int_X |\Wp|_{\bg} \, dV_{\bg} + 3
\gamma_1 \int_X |\Fp|_{\bg} \, dV_{\bg}.
\end{align}
We can estimate the integral on the left-hand side in terms of the Yamabe
invariant of $[g]$:
\begin{align} \label{Ybelow}
\int_X R_{\bg} \, dV_{\bg} \geq Y([g]) \Vol(\bg)^{1/2}.
\end{align}
For the terms on the right-hand side of (\ref{L1}) we use Cauchy-Schwartz:
\begin{align} \label{CS} \begin{split}
2 \sqrt{6} \int_X |\Wp|_{\bg} \, dV_{\bg}& + 3 \gamma_1 \int_X |\Fp|_{\bg} \,
dV_{\bg} \\
&\leq 2
\sqrt{6} \big( \int_X |\Wp|_{\bg}^2 \, dV_{\bg} \big)^{1/2} \Vol(\bg)^{1/2} + 3
\gamma_1 \big( \int_X |\Fp|_{\bg}^2 \, dV_{\bg} \big)^{1/2} \Vol(\bg)^{1/2} \\
&= 2 \sqrt{6} \big( \int_X |\Wp|_{g}^2 \, dV_{g} \big)^{1/2} \Vol(\bg)^{1/2} +
3
\gamma_1 \big( \int_X |\Fp|_{g}^2 \, dV_g \big)^{1/2} \Vol(\bg)^{1/2},
\end{split}
\end{align}
where the second line follows from conformal invariance of the integrals.
Combining (\ref{L1})-(\ref{CS}) and dividing by the square root of the volume we
arrive at (\ref{SG1}).

If equality is achieved then all of the inequalities above become equalities.
Equality in (\ref{Ybelow}) implies that $\bg$ is a Yamabe metric (hence
$C^{\infty})$, and
\begin{align} \label{FBpos}
R_{\bg} - 2\sqrt{6} |\Wp|_{\bg} = 3\gamma_1 |\Fp|_{\bg},
\end{align}
which proves (\ref{equality1}).  Since equality is attained in (\ref{CS}), it
follows that both $|\Wp|_{\bg}$ and $|\Fp|_{\bg}$ are constant.  By conformal
invariance of
the Yang--Mills energy, we can write the B\"ochner formula (\ref{Bochner}) with
respect to any metric in $[g]$.  If we use $\bg$ in place of $g$, then
(\ref{B2}) becomes
\begin{align} \label{B2B} \begin{split}
0 = \tfrac{1}{2} \Delta_{\bg} |F^{+}|_{\bg}^2 \geq |\nabla F^{+}|_{\bg}^2 +
\tfrac{1}{3} \Phi_{\bg} |\Fp|_{\bg}^2 = |\nabla F^{+}|_{\bg}^2,
\end{split}
\end{align}
hence $\Fp$ is parallel.

Finally, we establish the statements concerning $b_2^+$.  By the Bochner
formula for
real-valued self-dual two-forms (a special case of (\ref{Bochner})),
\begin{align} \label{WFb2}
\tfrac{1}{2} \Delta_{\bg} |\omega |^2 = |\nabla \omega|^2 - 2 \Wp_{\bg}
(\omega,
\omega) + \tfrac{1}{3} R_{\bg} |\omega|^2.
\end{align}
As a special case of Lemma \ref{sharpW} we have
\begin{align*}
|\Wp_{\bg} (\omega, \omega)| \leq \tfrac{2}{\sqrt{3}} |\Wp||\omega|^2.
\end{align*}
Therefore,
\begin{align*}
\tfrac{1}{2} \Delta_{\bg} |\omega |^2 &= |\nabla \omega|^2 - 2 \Wp_{\bg}
(\omega, \omega) + \tfrac{1}{3} R_{\bg} |\omega|^2 \\
&\geq |\nabla \omega|^2 + \tfrac{1}{3} \big( R_{\bg} - 2\sqrt{6} |\Wp|_{\bg}
\big) |\omega|^2 \\
&=|\nabla \omega|^2 + \gamma_1 |\Fp|_{\bg} |\omega|^2,
\end{align*}
where the last line follows from (\ref{FBpos}).  Since $|\Fp|_{\bar{g}}$ is
constant and
non-zero by assumption, we see that $\omega$ must vanish if $\gamma_1 > 0$,
implying $b_2^{+} = 0$.  If $\gamma_1 = 0$ we see that $\gw$ must be parallel,
as claimed.
\end{proof}

\subsection{Sharpness via \texorpdfstring{\boldmath$\SU(2)$}{SU(2)} instantons}
\label{ss:examples}

It is possible to achieve equality in (\ref{SG1}) via the classic construction
of BPST/ADHM $\SU(2)$ instantons on $\mathbb{S}^4$ (\cite{ADHM, BPST}).  We
recall the
most basic connection
in this class, the standard connection, for convenience (and also as a way to fix conventions). The standard connection is expressed on $\mathbb R^4$, thought of as $\mathbb{S}^4 \backslash \{N\}$) as the
$\mathfrak{su}(2)$-valued
$1$-form (cf. \cite{DK} (3.4.2))
\begin{align*}
\theta = \tfrac{1}{1 + \brs{x}^2} \left(\theta_1 \otimes \textbf{i} + \theta_2
\otimes \textbf{j}
+ \theta_3 \otimes \textbf{k} \right),
\end{align*}
where,
\begin{align*}
\theta_1 =&\ x_1 dx^2 - x_2 dx^1 + x_3 dx^4 - x_4 dx^3\\
\theta_2 =&\ x_1 dx^3 - x_3 dx^1 + x_4 dx^2 - x_2 dx^4\\
\theta_3 =&\ x_1 dx^4 - x_4 dx^1 + x_2 dx^3 - x_3 dx^2.
\end{align*}
Also, in keeping with our previous convention, we represent
$\{\textbf{i},\textbf{j}, \textbf{k}\}$ as real
matrices
\begin{align} \label{f:su2pauli}
\textbf{i} =&\
 \left(\begin{array}{c|c}
0  &  \begin{matrix} 1 & 0 \\ 0 & 1\end{matrix}\\ \cline{1-2}
\begin{matrix} -1 & 0 \\ 0 & -1\end{matrix} &  0\\
\end{array}\right),
 \quad \textbf{j} =
  \left(\begin{array}{c|c}
0  &  \begin{matrix} 0 & 1 \\ -1 & 0\end{matrix}\\ \cline{1-2}
\begin{matrix} 0 & 1 \\ -1 & 0\end{matrix} &  0\\
\end{array}\right)
, \quad \textbf{k} =
  \left(\begin{array}{c|c}
\begin{matrix} 0 & 1 \\ -1 & 0\end{matrix}  & 0 \\ \cline{1-2}
0 & \begin{matrix}  0 & -1 \\ 1 & 0\end{matrix}\\
\end{array}\right)
.
\end{align}
A classic calculation yields the relevant curvature tensor
\begin{align} \label{f:BPSTcurv}
F_{\N} = \tfrac{2}{\left(1 + \brs{x}^2\right)^2} \left\{ \left( dx^{12} +
dx^{34}
\right) \otimes \textbf{i} + \left( dx^{13} - dx^{24} \right) \otimes
\textbf{j}
+ \left( dx^{14}
+ dx^{23} \right) \otimes \textbf{k} \right\}.
\end{align}
Using our conventions for the metric induced by the Euclidean metric on
two-forms, $\brs{dx^{12}}^2 = 2$, etc.  Moreover, using (\ref{f:su2pauli}) and
(\ref{f:endoIP}), it follows that $\brs{\textbf{i}}^2 = \brs{\textbf{j}}^2 =
\brs{\textbf{k}}^2 = 2$.
Putting these together we can compute the pointwise norm of $F_{\N}$ in this context
to yield
\begin{align*}
\brs{F_{\N}}^2 =&\ \tfrac{4}{\left(1 + \brs{x}^2 \right)^4} \left(2
\brs{dx^{12} +
dx^{34}}^2 + 2 \brs{dx^{13} - dx^{24}}^2 + 2 \brs{dx^{14} + dx^{23}}^2 \right)\\
=&\ \tfrac{96}{\left(1 + \brs{x}^2 \right)^4}.
\end{align*}
Using the conformal invariance of the Yang--Mills energy, we thus obtain
\begin{align*}
\brs{\brs{F_{\N}}}_{L^2(\mathbb{S}^4, g_{\mathbb{S}^4})}^2 =&\ 96 \int_{\mathbb R^4}
\tfrac{1}{
\left(1
+ \brs{x}^2 \right)^4} \, dV_{Eucl} = 16 \pi^2.
\end{align*}

Having computed the Yang--Mills energy, we turn to the remaining quantities in
(\ref{SG1}).  As follows from \cite{Aubin}, the Yamabe
invariant of the round
$4$-sphere is $4(3) (\gw_4)^{\frac{1}{2}} = 12 \left( \frac{8}{3} \pi^2
\right)^{\frac{1}{2}} = 8 \sqrt{6} \pi$.  Thus, employing Lemmas
\ref{l:BLcommutatorestimate} and \ref{l:2formcommlemma} we observe that the
right hand side of the right hand side of (\ref{SG1}) can be estimated as
\begin{align*}
3 \gg_1 \brs{\brs{F^+_{\N}}}_{L^2} \leq 3 \left( \tfrac{2}{\sqrt{3}} \gg_0
\right)
(4
\pi) \leq 8 \sqrt{6} \pi = Y([g_{\mathbb{S}^4}]).
\end{align*}
Thus (\ref{SG1}) is an equality, as are all the intermediate estimates.  These
equalities reflect many interesting geometric properties of this classic charge
$1$ SU(2) instanton.  First, as discussed in Remark \ref{r:BPST}, the curvature
of this connection gives equality in the relevant algebraic inequalities we
used.  Furthermore, the theorem yields parallelism of $F^+_{\N}$ with respect
to
a
particular representative of $[g]$.  One can directly compute that the
distinguished BPST/ADHM connection representing the center of the moduli space
has parallel curvature with respect to the round metric.  However, since all
BPST/ADHM connections are given by pullback by an element of the conformal
group, one immediately concludes again that any such connection has parallel
curvature with respect to a particularly chosen element of the conformal class,
which our method explicitly constructs in a more general fashion via the
solution to the modified Yamabe problem.

\subsection{Proofs of Corollaries}\label{ss:corollaries}

\begin{proof}[Proof of Corollary \ref{cor1}] We recall the fundamental
Chern-Weil formula
\begin{align} \label{CW}
16 \pi^2 \kappa(E) = \int_X \tr (F_{\N} \wedge F_{\N}) = \int_X \left(
\brs{F_{\N}^-}_g^2 - \brs{F_{\N}^+}_g^2 \right) \, dV_g.
\end{align}
Let us first assume $\gk(E) \geq 0$, and $F_{\N}^+ \neq 0$, with the case
$\kappa(E) \leq 0$ directly analogous.  Since we have assumed our metric is
conformally flat and $Y([g]) > 0$, we may combine (\ref{SG1}) and (\ref{CW}) to
yield
\begin{align*}
\brs{\brs{F_{\N}}}_{L^2}^2 = \brs{\brs{F^-_{\N}}}_{L^2}^2 +
\brs{\brs{F_{\N}^+}}_{L^2}^2 = 16 \pi^2 \brs{\gk(E)} + 2 \int_X \brs{F_{\N}^+}^2_g  \, dV_g
\geq 16 \pi^2 \brs{\gk(E)} +  \tfrac{2 Y([g])^2}{9 \gamma_1^2},
\end{align*}
as claimed.  For the special case of $(\mathbb{S}^4, g_{\mathbb{S}^4})$, as
discussed in \S
\ref{ss:examples} know that the Yamabe invariant is $8 \sqrt{6} \pi$.  Moreover
using Lemmas \ref{l:BLcommutatorestimate} and \ref{l:2formcommlemma}, we know
that in the case of structure group $\SU(2)$ we may choose $\gamma_1 =
\frac{4}{\sqrt{6}}$, whereas for $\SO(3)$ one has $\gamma_1 =
\frac{2}{\sqrt{3}}$.  This yields the remaining statements.
\end{proof}

\begin{proof}[Proof of Corollary \ref{cor2}] We give a very brief sketch which
assumes familiarity with the papers (\cite{Schlatter}, \cite{Struwe}), and
Yang--Mills flow in general.  In particular, as discussed in (\cite{Struwe}
Theorem 2.3, \cite{Schlatter} Theorem 1.1), any  smooth connection admits a
unique solution to Yang--Mills flow with the prescribed initial data, which
moreover encounters a singularity at either finite or infinite time via
``concentration of energy.''  As made precise in (\cite{Struwe} Theorem 2.4,
\cite{Schlatter} Theorem 1.2), at any singular point in spacetime one can
construct at least a maximal bubble defined as a limit of blowup sequences,
which converge in the Uhlenbeck sense to a nontrivial Yang--Mills connection
over
$(\mathbb{S}^4, g_{\mathbb{S}^4})$.  Crucially, the energy of this limiting
connection is no
larger than the energy of the initial connection.  Thus, comparing
(\ref{flowbound}) against the results in Corollary \ref{cor1}, we see that the
energy inequalities cannot hold, and therefore this limiting connection must be
an instanton.  However, comparing against (\ref{CW}), we see that any nonflat
instanton must have energy at least $16 \pi^2$, thus we have arrived at a
contradiction.  Thus the flow exists globally and the time slices converge
subsequentially as time approaches infinity to a smooth limiting Yang--Mills
connection.  In the case of $(\mathbb{S}^4, g_{\mathbb{S}^4})$ it is clear by the argument above that the limiting connection is flat.
By employing \L{}ojasiewicz-Simon arguments (cf. \cite{Rade2} Proposition 7.2, \cite{Yang}, \cite{FeehanBook} Theorem 7, all based on the classic \cite{Simon} Theorem 2) one can improve this $C^{\infty}$ Uhlenbeck subsequential convergence to convergence of the entire flow line.
\end{proof}


\begin{thebibliography}{s}
%
%
%
%

\bibitem{ADHM} M. Atiyah, V. Drinfeld, N. Hitchin, Y. Manin.,
\emph{Construction
of instantons}, \emph{Phys. Lett}., 65:185--187, 1978.

\bibitem{Aubin} T. Aubin, \emph{Equations diff\'erentielles non lin\'eaires et
Probleme de Yamabe concernant la courbure scalaire}, J. Math. Pures et appl. 55,
(1976) 269-296.

\bibitem{BPST} A. Belavin, A. Polyakov, A. Schwarz, Y. Tyupkin,
\emph{Pseudoparticle solutions of the Yang--Mills equations}, \emph{Phys.
Lett}.,
59B:8--87, 1975

\bibitem{Besse}  A. L. Besse, \emph{Einstein manifolds}. Ergebnisse der
Mathematik und ihrer Grenzgebiete (3) [Results in Mathematics and Related Areas
(3)], 10. Springer-Verlag, Berlin, 1987. xii+510 pp.

\bibitem{Bor} G. Bor, \emph{Yang--Mills fields which are not self-dual} Comm.
Math. Phys. 145, 393-410 (1992).

\bibitem{BW} J.P. Bourguignon, \emph{Formules de Weitzenb\"ock en dimension 4}, in ``G\'eometrie Riemannienne de dimension $4$,'' CEDIC, Paris 1981.

\bibitem{BL} J.P. Bourguignon, H. Lawson, \emph{Stability and isolation
phenomena for Yang--Mills fields} Comm. Math. Phys. 79, 189-230 (1981).

\bibitem{ChenShen} Y-M. Chen, C-L. Shen, \emph{Evolution of Yang-Mills connections}, Differential geometry (Shanghair, 1991), World Sci. Publ., River Edge, NJ (1993), 33-41.

\bibitem{MOD} J. Dodziuk, M. Min-Oo, \emph{An $L_2$-isolation theorem for Yang-Mills fields over complete manifolds}, Compositio Math. 47 (1982), 165-169.

\bibitem{Donaldson1} S.K. Donaldson, \emph{An application of gauge theory to
four-dimensional topology}, J. Diff. Geom. 18 (2), 279-315.

\bibitem{Donaldson2} S.K. Donaldson, \emph{Polynomial invariants for smooth
four-manifolds}, Topology 29 (1990), 257-315.

\bibitem{DK} S.K. Donaldson, P.B. Kronheimer, \emph{The geometry of
four-manifolds}, Oxford Mathematical Monographs, (1990)

\bibitem{Feehan} P. Feehan, \emph{Energy gap for Yang--Mills connections, I:
Four-dimensional closed Riemannian manifolds}, Adv. Math. 296 (2016), 55-84.

\bibitem{FeehanBook} P. Feehan \emph{Global existence and convergence of
solutions to gradient systems and applications to Yang--Mills gradient flow},
\textsf{arXiv:1409.1525}.

\bibitem{Gerhardt} C. Gerhardt, \emph{An energy gap theorem for Yang-Mills connections}, Comm. Math. Phys. 298 (2010), 515-522.

\bibitem{G1}  M. J. Gursky, \emph{Four-manifolds with $\delta W^{+} = 0$ and
Einstein constants of the sphere}, Math. Ann. 318 (2000), no. 3, 417–-431.

\bibitem{GL1}  M. J. Gursky and C. LeBrun, \emph{Yamabe invariants and spin-c
structures}, Geom. Funct. Anal. 8 (1998), no. 6, 965–-977.

\bibitem{GL2} M. J. Gursky and C. LeBrun, \emph{On Einstein manifolds of
positive sectional curvature}, Ann. Global Anal. Geom. 17 (1999), no. 4,
315–-328.

\bibitem{KMN} H. Kozono, Y. Maeda, H. Naito, \emph{Global solution for the Yang--Mills gradient flow on $4$-manifolds}, Nagoya Math. J. 139 (1995), 93-128.

\bibitem{LeBrun} C. LeBrun, \emph{Ricci curvature, minimal volumes, and
Seiberg-Witten theory}, Invent. Math. 145 (2001), no. 2, 279–-316.

\bibitem{LeeParker} J. Lee, T. Parker, \emph{The Yamabe problem}, Bull. Amer. Math. Soc., Vol. 17, No. 1 (1987), 37-91.

\bibitem{MinOo} Min-Oo \emph{An $L_2$-isolation theorem for Yang--Mills fields},
Comp. Math. Vol 47, Fasc. 2, 1982, 153-163.

\bibitem{Parker} T. Parker, \emph{Non-minimal Yang--Mills fields and dynamics}
Invent. Math. (1992), Vol. 107, Issue 2, 397-420.

\bibitem{Parker2} T. Parker, \emph{Gauge theories on four-dimensional Riemannian manifolds}, Comm. Math. Phys. 85 (1982), 563-602.

\bibitem{Rade} J. R\aa de \emph{Decay estimates for Yang--Mills fields: two new
proofs} Global analysis in modern mathematics (Orono, ME, 1991) 91-105, Publish
or Perish, Houston, TX, 1993.

\bibitem{Rade2} J. R\aa de \emph{On the Yang--Mills heat equation in two and three dimensions} J. reine angew. Math. 431 (1992), 123-163.

\bibitem{SS1} L. Sadun, J. Segert, \emph{Non-self-dual Yang--Mills connections
with quadropole symmetry}, Comm. Math. Phys. 145, 363-391 (1992).

\bibitem{Seaman} W. Seaman, \emph{Harmonic two-forms in four dimensions}, Proc.
Amer. Math. Soc, Vol. 112, No. 2, 1991.

\bibitem{Schlatter} A. Schlatter, \emph{Long-time behaviour of the Yang--Mills
flow in four dimensions}, Ann. Glob. Anal. Geom. 15: 1-25, 1997.

\bibitem{Shen} C.L. Shen, \emph{The gap phenomena of Yang-Mills fields over the complete manifold}, Math. Z. 180 (1982), 69-77.

\bibitem{SSU} L.M. Sibner, R.J. Sibner, K. Uhlenbeck, \emph{Solutions to
Yang--Mills equations that are not self-dual}, Proc. Natl. Acad. Sci. USA VOl.
86, 8610-8613, Nov. 1989.

\bibitem{Simon} L. Simon, \emph{Asymptotics for a class of nonlinear evolution
equations, with applications to geometric problems}, Ann. Math. (2) 118 (1983),
525-571.

\bibitem{SW} E.M. Stein, G. Weiss, \emph{Introduction to Fourier analysis on
Euclidean spaces}, Princeton Math. Series 32, Princeton University Press,
Princeton, NJ 1971.

\bibitem{Struwe} M. Struwe, \emph{The Yang--Mills flow in four dimensions},
Calc.
Var. 2, 123-150 (1994).

\bibitem{Xin} Y.L. Xin, \emph{Remarks on gap phenomena in four dimensions}, Calc. Var. PDE 2 (1994), 123-150.

\bibitem{Yang} B. Yang, \emph{The uniqueness of tangent cones for Yang--Mills connections with isolated singularities}, Adv. Math. 180, Issue 2, 648-691.

\end{thebibliography}
\end{document}